\newtheorem*{conjecture}{Conjecture}
\newtheorem{thm}{Theorem}
\newtheorem{lem}[thm]{Lemma}
\begin{document}

\title{The linear complexity  of new binary cyclotomic sequences of period $p^n$}
\author{Vladimir Edemskiy}

\maketitle

\begin{abstract}
In this paper, we determine the linear complexity  of a class of new
binary cyclotomic sequences of period $p^n$ constructed by  Z. Xiao
et al. (Des. Codes Cryptogr. DOI 10.1007/s10623-017-0408-7) and
prove their conjecture about high linear complexity of these
sequences.

\noindent \textbf{Keywords}: binary sequences,  linear complexity,
cyclotomy, generalized cyclotomic sequence

\noindent \textbf{Mathematics Subject Classification (2010)}: 94A55,
 94A60
\end{abstract}

\section{Introduction}
\label{intro}

The linear complexity is an important characteristic of
pseudo-random sequence significant for  stream ciphers. It is
defined as the length of the shortest linear feedback shift register
that can generate the sequence \cite{LN}. According to the
Berlekamp-Massey (B-M) algorithm, it is reasonable to suggest that
the linear complexity of a ``good" sequence should be at least a
half of the period \cite{LN}.

A generalized cyclotomy with respect to $N=pq$, where $p,q$ are odd
primes, was introduced by Whiteman \cite{W} and was extended  with
respect to odd integers in \cite{DH1}.  Whiteman generalized
cyclotomy is not consistent with classical cyclotomy. New
generalized cyclotomy which includes classical cyclotomy as a
special case, was introduced by Ding and Helleseth in \cite{DH2}.
The unified  approach to determine both of Whiteman and
Ding-Helleseth generalized cyclotomy  was  proposed in \cite{FG}.
Another approach was presented in \cite{ZCTY}, where the order of
the generalized cyclotomic classes depends on the choice of
parameters (see Section 2).

 Using classical
cyclotomic classes and generalized cyclotomic classes to construct
sequences is a known method for design sequences  with a high linear
complexity \cite{CDR}. There are many papers study the linear
complexity of the binary and non-binary generalized cyclotomic
sequences of period $p^n$ \cite{CM, DC, E,KS, WCD, YLX}(see also
references therein).

The linear complexity of new binary sequences of period $p^2$ for an
odd prime $p$  constructed based on the generalized cyclotomy
introduced in \cite{ZCTY} was studied in \cite{ZX}. In conclusion,
authors of this paper made a conjecture about the linear complexity
of the considered sequences with period $p^n$. Here, we will try to
prove the conjecture from \cite{ZX}.

The remainder of this paper is organized as follows. In Section 2 we
recall a definition  of new generalized cyclotomic sequences. The
linear complexity of these sequences is studied in Section 3.

\section{Preliminaries}
In this section, we first briefly recall the definition of
generalized cyclotomic classes with respect to $p^j$ for any integer
$j\geq 1$, and definition of  new generalized cyclotomic sequences
from \cite{ZX}. Throughout this paper $\mathbb{Z}_N$ will denote the
ring of integers modulo $N$, where $N$ is a positive integer. We
will also let $\mathbb{Z}_N^{*}$ denote  the multiplicative group
consisting of all invertible elements in $\mathbb{Z}_N$.

Let $p$ be an odd prime and $p=ef+1$, where $e,f$ are positive
integers. Denote $g$  a primitive root modulo $p^2$. Then $g$ is
also a primitive root modulo $p^j$ for each integer $j\geq 1$
\cite{IR}. It is well known that an order $g$ by modulo $p^j$ is
equal to $\varphi(p^j)$, where $\varphi(\cdot)$ is the Euler's
totient function.  Denote $d_j=\varphi(p^j)/e, j=1,2,\dots,n$ and
define
\begin{multline}
\label{eq1}
D^{(p^j )}_0 = \{g^{d_ j t}~(\bmod~ p^j)  : 0\leq t < e\}, \text {and} \\
D^{(p^j )}_ i = g^i D^{(p^j )}_0 = \{g^i x~(\bmod~ p^j) : x \in
D^{(p^j )}_0\}, i = 1, 2, \dots , d_j - 1.
\end{multline}
Here and hereafter,  we denote by $x \pmod N$  the least
 nonnegative integer that is congruent to $x$ modulo $N$. Cosets $D^{(p^j )}_ i$ are call generalized cyclotomic classes of
order $d_ j$ with respect to $p^j$. Since $g^{p^{l-1}(p-1)} \equiv
1~(\bmod~ p^l)$, it follows from the definition $D^{(p^j )}_ i$ that
\begin{equation}
\label{eq2} D^{(p^j )}_ i~(\bmod~p^l)=D^{(p^l )}_{i~(\bmod~d_l)}
\text{ for } l : 0<l \leq j.
\end{equation}

By \cite{ZCTY} we have that $\{D^{(p^j )}_ 0, D^{(p^j )}_
1,\dots,D^{(p^j)}_ {d_j-1}\}$ forms a partition of
$\mathbb{Z}^{*}_{p^j}$ and  for an integer $m\geq1$
$$ \mathbb{Z}_{p^m}=\bigcup_{j=1}^{m} \bigcup_{i=0}^{d_j-1}
p^{m-j}D^{(p^j )}_ i \cup \{0\}.$$

As in \cite{ZX} we take $f = 2^r$ with $r\geq 1$ and let $b$ be an
integer with $0\leq b \leq p^{n-1} f-1$. We now define two family of
sets for $m= 1,2,\dots, n$
\begin{equation}
\label{eq3} \mathcal{C}_0^{(p^m)}=\bigcup_{j=1}^{m}
\bigcup_{i=d_j/2}^{d_j-1} p^{n-j}D^{(p^j )}_ {(i+b)\pmod{d_j}}\text{
 and  } \mathcal{C}_1^{(p^m)}=\bigcup_{j=1}^{m}
\bigcup_{i=0}^{d_j/2-1} p^{m-j}D^{(p^j )}_ {(i+b)~(\bmod {d_j})}\cup
\{0\}. \end{equation}
 These are
 obvious facts that $\mathbb{Z}_{p^m}=\mathcal{C}_0^{(p^m)} \cup
\mathcal{C}_1^{(p^m)}$ and $|\mathcal{C}_1^{(p^m)}|=(p^m+1)/2$. Then
we define the  generalized cyclotomic binary sequence
$s^\infty=(s_0,s_1,s_2,\dots)$ with period $p^n$ by
\begin{equation}
\label{eq4} s_i =\begin{cases}
 0,&\text{ if  }   i~(\bmod~p^n)  \in \mathcal{C}_0^{(p^n)}; \\
  1,&\text{ if }  \hspace{4pt} i~(\bmod~p^n)  \in \mathcal{C}_1^{(p^n)}. \\
 \end{cases}
\end{equation}
Here $\mathcal{C}_1^{(p^n)}$ is the characteristic set of the
sequence $s^\infty$. The linear complexity and other characteristics
of these sequences were discussed in \cite{ZX} for $n=2$  and
authors  made a conjecture about the linear complexity of
$s^\infty=(s_0,s_1,s_2,\dots)$ for any $n$ and $p: \ 2^{(p-1)} \not
\equiv 1~(\bmod~p^2)$ in conclusion.
\begin{conjecture} [\cite{ZX}]
 Let
$s^\infty$ be a generalized cyclotomic binary sequence of period
$p^n$ defined by \eqref{eq4}. If  $2^{(p-1)} \not \equiv
1~(\bmod~p^2)$, then
$$ L =\begin{cases}
 p^n-\frac{p-1}{2}-\delta\left (\frac{p^n+1}{2}\right) ,&\text{ if  }   2 \in D^{(p)}_0, \\
p^n-\delta\left (\frac{p^n+1}{2}\right) ,&\text{ if  }   2 \not \in D^{(p)}_0, \\
 \end{cases}$$
where $\delta(t) = 1$ if $t$ is even and $\delta(t) = 0$ if $t$ is
odd. \end{conjecture}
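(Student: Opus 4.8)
The standard approach to computing the linear complexity of a periodic binary sequence of period $p^n$ is through the generating polynomial $S(x) = \sum_{i=0}^{p^n-1} s_i x^i$ over $\mathbb{F}_2$, using the formula $L = p^n - \deg\gcd(x^{p^n}-1, S(x))$. Since $x^{p^n}-1 = (x-1)^{p^n}$ over $\mathbb{F}_2$, the whole problem reduces to finding the largest power of $(x-1)$ dividing $S(x)$, equivalently the smallest $k$ such that $S^{(k)}(1) \neq 0$ in the sense of evaluating successive Hasse derivatives, or more directly the multiplicity of the root $x=1$ of $S(x)$. The plan is to track this multiplicity layer by layer through the nested cyclotomic structure of $\mathcal{C}_1^{(p^n)}$.

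First I would introduce, for each level $j$, the partial defining polynomials built from the cyclotomic classes $D^{(p^j)}_i$, and exploit the projection relation \eqref{eq2} together with the recursive union structure in \eqref{eq3}. A clean way to organize the computation is to pass to the quotients: evaluate $S(x) \bmod (x^{p^{j}}-1)$ and understand how each scaling $p^{n-j}$ shifts the exponents, since multiplying an index set by $p^{n-j}$ corresponds to the substitution $x \mapsto x^{p^{n-j}}$, and over $\mathbb{F}_2$ we have $x^{p^{n-j}}-1 = (x-1)^{p^{n-j}}$. I would define the character-like sums $T_i^{(j)}(x) = \sum_{a \in D^{(p^j)}_i} x^a$ and compute their values and those of their derivatives at $x=1$, reducing everything to counting the sizes of the cyclotomic classes modulo $2$ and to the behavior of $2$ inside the group generated by $g$, i.e. whether $2 \in D^{(p)}_0$. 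This is exactly where the hypothesis $2^{p-1}\not\equiv 1 \pmod{p^2}$ enters: it guarantees that $2$ generates a subgroup of full ramification across the levels $j=1,\dots,n$, so that the divisibility behavior at level $j$ is controlled uniformly rather than collapsing at some intermediate $p^l$.

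The key technical step is to establish a recursion relating the multiplicity of $(x-1)$ in the level-$m$ polynomial to that in the level-$(m-1)$ polynomial, isolating the new contribution from the top cyclotomic classes $D^{(p^m)}_i$. Concretely I would show that the contribution of the outermost layer contributes a factor whose $(x-1)$-adic valuation is governed by $\varphi(p^m)/2 = \tfrac{p-1}{2}p^{m-1}$ and by the parity determined by $\delta\big(\tfrac{p^n+1}{2}\big)$, the latter accounting for the isolated element $\{0\}$ and the overall parity of $|\mathcal{C}_1^{(p^n)}| = (p^n+1)/2$. The two cases $2\in D^{(p)}_0$ versus $2\notin D^{(p)}_0$ split according to whether $2$ lies in the ``lower half'' index range defining $\mathcal{C}_1$; in the first case an extra collapse of size $\tfrac{p-1}{2}$ occurs, producing the $-\tfrac{p-1}{2}$ term in the conjectured formula, whereas in the second case no such collapse happens.

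I expect the main obstacle to be the bookkeeping of the shift parameter $b$ together with the parity conditions $i+b \pmod{d_j}$ in \eqref{eq3}: the role of $b$ must be shown to be immaterial to the final linear complexity (the conjecture's answer is independent of $b$), which requires a cyclic-symmetry argument showing that shifting the selected classes does not change the $(x-1)$-adic valuation of the relevant auxiliary polynomials. Carrying this through simultaneously for all levels $j$, while correctly separating the ramified factor $(x-1)^{p^{n-j}}$ from the reduced polynomial in $\mathbb{F}_2[x]/(x^{p^j}-1)$, is the delicate part; once the level-by-level recursion and the $b$-independence are secured, matching the two stated cases against the parity correction $\delta\big(\tfrac{p^n+1}{2}\big)$ is a direct verification.
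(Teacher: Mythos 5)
There is a genuine gap, and it is fatal to the whole plan: your opening reduction rests on the identity $x^{p^n}-1=(x-1)^{p^n}$ over $\mathbb{F}_2$, which is false. That factorization holds in characteristic $p$, not in characteristic $2$. Here the sequence is binary and the period $p^n$ is odd, so $\gcd(p^n,2)=1$ and $x^{p^n}-1$ is \emph{separable} over $\mathbb{F}_2$: it has $p^n$ distinct roots (the $p^n$-th roots of unity) in $\bar{\mathbb{F}}_2$, and it factors into distinct irreducible polynomials corresponding to the $2$-cyclotomic cosets modulo $p^n$. Consequently there is no ``$(x-1)$-adic multiplicity'' of $S(x)$ to track, no Hasse-derivative criterion, and no ``ramified factor $(x-1)^{p^{n-j}}$'' to separate off; the quantity $\deg\gcd\bigl(x^{p^n}-1,S(x)\bigr)$ is simply the \emph{number of distinct} $p^n$-th roots of unity $\beta^i$ with $S(\beta^i)=0$. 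Everything downstream of your first paragraph --- the valuation recursion between levels $m-1$ and $m$, the claim that the non-Wieferich hypothesis controls ``full ramification'' of a valuation, the parity bookkeeping of $\delta$ as a collapse of $(x-1)$-powers --- inherits this error and cannot be repaired within that framework.

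The paper's proof works with exactly the root-counting formulation you should have adopted (its equation \eqref{eq5}): one must decide, for each $h\in\mathbb{Z}_{p^n}$, whether $T^{(p^n)}_b(\alpha_n^h)=1$, where $\alpha_n$ is a primitive $p^n$-th root of unity in $\bar{\mathbb{F}}_2$. The genuinely hard step, which your proposal has no counterpart for, is Lemma \ref{l6}: for every $h$ \emph{not} of the form $p^{n-1}a$, the value $T^{(p^n)}_k(\alpha_n^h)$ lies outside $\mathbb{F}_2$ altogether. This is proved by a Frobenius-orbit argument (squaring permutes the cyclotomic classes by the index of $2$), combined with relation \eqref{eq7}, to manufacture a nonzero polynomial of degree $p-1$ over $\mathbb{F}_2(\alpha_{m-1})$ having $\alpha_m$ as a root; that contradicts $[\mathbb{F}_2(\alpha_m):\mathbb{F}_2(\alpha_{m-1})]=p$, and it is precisely here --- through the multiplicative order of $2$ modulo $p^j$ (Lemmas \ref{l1}--\ref{l3}) --- that the hypothesis $2^{p-1}\not\equiv 1 \pmod{p^2}$ is used. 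Once those roots are excluded, the only candidates left are $\alpha_n^h$ with $h\in p^{n-1}\mathbb{Z}_p$, which reduce via Lemma \ref{l5} to the period-$p$ case already settled in \cite{ZX}; that is where the dichotomy $2\in D^{(p)}_0$ versus $2\notin D^{(p)}_0$ produces the $(p-1)/2$ term, and the evaluation $S(1)=(p^n+1)/2 \pmod 2$ produces the $\delta$ term. Your instinct that the answer must be independent of $b$ is correct, but in the actual proof this is automatic (Lemma \ref{l6} holds for all shifts $k$ simultaneously), not a separate symmetry argument.
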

 Our goal is a proof of this
conjecture. We need some preliminary notation and results before we
begin.

\section{Linear complexity}

It is well known that if $s^\infty=(s_0,s_1,s_2,\dots)$  is a
sequence of period $N$, then
 the linear complexity of this
sequence is defined by
 $$  L=N-\deg  \gcd\bigl (x^{N}-1,S(x)\bigr ),
$$
where  $S(x) = s_0 + s_1x + ... + s_{N-1}x^{N-1}$ (see, for example
\cite{CDR}). Sometimes, $S(x)$ is called the generating polynomial
of sequence $s^\infty$. One way to find the linear complexity of a
sequence is to examine roots of $S(x)$. Let $\beta $ be a primitive
$N$-th root of unity in the extension of $\mathbb{ F}_2$ (the finite
field of order two). Then latter formula can be rewritten as
\begin{equation} \label{eq5}
L=N-\left|\left\{i \left| S(\beta^i)=0,\ \ i=0,1,\dots,N-1
\right.\right\}\right|.
\end{equation}
In this section, we shall prove the conjecture about the linear
complexity of  new generalized cyclotomic binary sequences of period
$p^n$ defined in \eqref{eq4}. In the following subsections we divide
the proof of the conjecture into  several lemmas.

\subsection{Subsidiary lemmas}
 In the following two propositions, we present some properties related to $2~(\bmod~p^j)$,
 whose validity  are  easily verified  from the definition and basic number theory.
\begin{lem}
\label{l1} Let $ 2^{(p-1)}  \not \equiv 1~(\bmod~p^2)$ and let $c$
be  the least positive exponent such that $ 2^c  \equiv
1~(\bmod~p)$. Then $2^k \equiv 1~(\bmod~p^{j+1})$ for $j\geq 1$ if
and only if $k \equiv 0~(\bmod~cp^j)$.
\end{lem}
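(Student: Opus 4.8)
The plan is to compute the multiplicative order of $2$ modulo $p^{j+1}$ explicitly and show it equals $cp^j$; the claimed equivalence then follows at once, since $2^k \equiv 1 \pmod{p^{j+1}}$ holds precisely when that order divides $k$. First I would write $2^c = 1 + p^s u$ with $p \nmid u$ and $s \geq 1$, which is possible because $c = \operatorname{ord}_p(2)$ forces $p \mid 2^c - 1$. The hypothesis is exactly what pins down $s = 1$: since $c \mid p-1$, putting $m = (p-1)/c$ gives $2^{p-1} = (1 + p^s u)^m = 1 + m p^s u + \cdots$, so if $s \geq 2$ then every term beyond the leading $1$ is divisible by $p^2$, whence $2^{p-1} \equiv 1 \pmod{p^2}$, contradicting $2^{p-1} \not\equiv 1 \pmod{p^2}$. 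Thus $2^c = 1 + pu$ with $p \nmid u$.

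Next I would prove by induction on $j \geq 0$ that $2^{cp^j} = 1 + p^{j+1} w_j$ with $p \nmid w_j$, i.e. that the $p$-adic valuation satisfies $v_p(2^{cp^j} - 1) = j+1$. The base case $j = 0$ is exactly the previous paragraph. For the inductive step I raise $2^{cp^j} = 1 + p^{j+1} w_j$ to the $p$-th power and expand: the index-$1$ binomial term contributes $p^{j+2} w_j$, the index-$2$ term carries the factor $\binom{p}{2} = p(p-1)/2$ (an integer divisible by $p$ because $p$ is odd) times $p^{2(j+1)}$, and all higher terms carry strictly larger powers of $p$. Hence modulo $p^{j+3}$ only the constant and the index-$1$ term survive, giving $2^{cp^{j+1}} = 1 + p^{j+2} w_{j+1}$ with $p \nmid w_{j+1}$, which closes the induction.

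From $v_p(2^{cp^j} - 1) = j+1$ I extract that $2^{cp^j} \equiv 1 \pmod{p^{j+1}}$, so $d := \operatorname{ord}_{p^{j+1}}(2)$ divides $cp^j$. Since $c \mid p-1$, the number $c$ is coprime to $p$, so $d = c_1 p^{j_1}$ with $c_1 \mid c$ and $0 \leq j_1 \leq j$. Reducing modulo $p$ shows that the order $c$ of $2$ modulo $p$ divides $d$, which together with $\gcd(c,p)=1$ forces $c_1 = c$, so $d = cp^{j_1}$. Finally $j_1 = j$, for otherwise $2^{cp^{j_1}} \equiv 1 \pmod{p^{j+1}}$ would contradict $v_p(2^{cp^{j_1}} - 1) = j_1 + 1 \leq j$. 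Hence $d = cp^j$, and $2^k \equiv 1 \pmod{p^{j+1}}$ holds if and only if $cp^j \mid k$, i.e. $k \equiv 0 \pmod{cp^j}$, as claimed. The main obstacle is the valuation induction of the second paragraph: one must track the powers of $p$ in the binomial expansion carefully, in particular invoking the oddness of $p$ so that the index-$2$ term cannot interfere, which is what guarantees the valuation rises by exactly one at each step rather than jumping or stalling.
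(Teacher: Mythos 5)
Your proof is correct and follows essentially the same route as the paper: both arguments extract $v_p(2^c-1)=1$ from the hypothesis $2^{p-1}\not\equiv 1 \pmod{p^2}$ and then track the $p$-adic valuation of $2^k-1$ through binomial expansions, using the oddness of $p$ to control the quadratic term. The only difference is packaging: you phrase the conclusion as $\operatorname{ord}_{p^{j+1}}(2)=cp^j$ and invoke the order-divides-exponent fact, while the paper writes an arbitrary $k$ as $cdp^l$ with $\gcd(d,p)=1$ and reads off $v_p(2^k-1)=l+1$ directly; the two are interchangeable.
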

\begin{proof}
  It is clear that $c|k$. Let $k=cdp^l$, where $
\gcd(p,d)=1$ and $l\geq 0$. By condition of Lemma \ref{l1}, there
exist $t: 2^c=1+pt, \gcd(p,t)=1$. Using  properties of binomial
coefficients we obtain that $2^{cd}\equiv 1+dtp~(\bmod~p^{2})$ and
$2^{cdp^l}\equiv 1+dtp^{l+1}~(\bmod~p^{l+2})$. Hence, if $2^k \equiv
1~(\bmod~p^{j+1})$ then $l\geq j$. To conclude the proof, it remains
to note that $2^{cp^j}\equiv 1+cp^{j+1}~(\bmod~p^{j+2})$.
\end{proof}

\begin{lem}
\label{l2} Let $ 2^{(p-1)}  \not \equiv 1~(\bmod~p^2)$ and $ 2
\equiv g^u~(\bmod~p^2)$. Then  $u \not \equiv 0~(\bmod~p)$.
\end{lem}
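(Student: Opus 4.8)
The plan is to translate the hypothesis $2^{(p-1)} \not\equiv 1 \pmod{p^2}$ directly into a statement about the exponent $u$ by exploiting the cyclic structure of $\mathbb{Z}_{p^2}^{*}$. The guiding observation is that, since $g$ is a primitive root modulo $p^2$, an element of the form $g^u$ lies in the unique subgroup of order $p-1$ exactly when $p \mid u$, and this subgroup coincides with the set of elements $x$ satisfying $x^{p-1} \equiv 1 \pmod{p^2}$.

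First I would raise both sides of $2 \equiv g^u \pmod{p^2}$ to the power $p-1$, obtaining $2^{p-1} \equiv g^{u(p-1)} \pmod{p^2}$. As recalled in Section 2, the order of $g$ modulo $p^2$ is $\varphi(p^2) = p(p-1)$, so $g^{u(p-1)} \equiv 1 \pmod{p^2}$ holds if and only if $p(p-1) \mid u(p-1)$, which, after cancelling the factor $p-1$, is equivalent to $p \mid u$. Combining these two implications shows that $2^{p-1} \equiv 1 \pmod{p^2}$ if and only if $u \equiv 0 \pmod{p}$.

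The conclusion then follows by contraposition: since the hypothesis states $2^{p-1} \not\equiv 1 \pmod{p^2}$, we are forced to have $u \not\equiv 0 \pmod{p}$. There is no substantial obstacle in this argument; the only points requiring care are the order computation $\varphi(p^2) = p(p-1)$ and the cancellation $p(p-1) \mid u(p-1) \iff p \mid u$, both of which are immediate consequences of $g$ being a primitive root modulo $p^2$. I note that Lemma \ref{l1} is not needed here, as the entire argument rests solely on the cyclicity of $\mathbb{Z}_{p^2}^{*}$.
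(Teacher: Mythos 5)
Your proposal is correct and follows essentially the same route as the paper: both arguments raise $2 \equiv g^u \pmod{p^2}$ to the power $p-1$ and use that the order of $g$ modulo $p^2$ is $\varphi(p^2)=p(p-1)$, so that $p \mid u$ would force $2^{p-1} \equiv g^{u(p-1)} \equiv 1 \pmod{p^2}$, contradicting the hypothesis. The only cosmetic difference is that you phrase it as a contraposition with a full ``if and only if,'' while the paper states it as a one-line proof by contradiction.
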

\begin{proof}
 Suppose  $ u  \equiv
0~(\bmod~p)$; then  $g^{u(p-1)} \equiv 1~(\bmod~p^2)$ and $
2^{(p-1)} \equiv 1~(\bmod~p^2)$. This contradicts  the condition of
Lemma \ref{l2}.
\end{proof}

Let $\bar{\mathbb{F}}_2$ be an algebraic closure of $\mathbb{F}_2$
and let $\alpha_n \in \bar{\mathbb{F}}_q$  be a primitive $p^n$-th
root of unity. Denote $\alpha_j=\alpha_n^{p^{n-j}}, j=1,2\dots,n-1$.
Then $\alpha_j$ is a primitive $p^j$-th root of unity in an
extension field of $\mathbb{F}_2$. As usual,
$\mathbb{F}_2(\alpha_j)$ denote a simple extension of $\mathbb{F}_2$
obtained by adjoining an algebraic element $\alpha_j$ \cite{LN}. The
dimension of the vector space $\mathbb{F}_2(\alpha_j)$ over
$\mathbb{F}_2$ is then called the degree of $\mathbb{F}_2(\alpha_j)$
over $\mathbb{F}_2$, in symbols $[\mathbb{F}_2(\alpha_j):
\mathbb{F}_2]$.
\begin{lem}
\label{l3}
Let $ 2^{(p-1)}  \not \equiv 1~(\bmod~p^2)$. Then
$$[\mathbb{F}_2(\alpha_{j+1}): \mathbb{F}_2(\alpha_j)]=p$$
for $j=1,2,\dots,n-1.$
\end{lem}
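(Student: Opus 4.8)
The plan is to compute the degree of the extension $[\mathbb{F}_2(\alpha_{j+1}):\mathbb{F}_2]$ directly and compare it with $[\mathbb{F}_2(\alpha_j):\mathbb{F}_2]$, since these two degrees differ by the multiplicative factor we want to identify as $p$. Recall the standard fact that for a primitive $p^j$-th root of unity $\alpha_j$ over $\mathbb{F}_2$, the degree $[\mathbb{F}_2(\alpha_j):\mathbb{F}_2]$ equals the multiplicative order of $2$ modulo $p^j$; this is because $\alpha_j$ generates $\mathbb{F}_{2^m}$ where $m = \mathrm{ord}_{p^j}(2)$, the smallest $m$ with $p^j \mid 2^m - 1$. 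So I would first reduce the lemma to the purely number-theoretic statement
$$\frac{\mathrm{ord}_{p^{j+1}}(2)}{\mathrm{ord}_{p^j}(2)} = p \quad \text{for } j = 1,2,\dots,n-1.$$

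Next I would invoke Lemma~\ref{l1}, which under the hypothesis $2^{p-1}\not\equiv 1\pmod{p^2}$ characterizes exactly when $2^k\equiv 1\pmod{p^{j+1}}$, namely iff $k\equiv 0\pmod{cp^j}$, where $c=\mathrm{ord}_p(2)$. This gives immediately that $\mathrm{ord}_{p^{j+1}}(2)=cp^j$ for every $j\geq 0$ (reading $p^0=1$ for the base modulus $p$, so that $\mathrm{ord}_p(2)=c$). Consequently
$$[\mathbb{F}_2(\alpha_{j+1}):\mathbb{F}_2]=cp^{j}\quad\text{and}\quad[\mathbb{F}_2(\alpha_j):\mathbb{F}_2]=cp^{j-1},$$
and their ratio is exactly $p$. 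Finally, by the tower law $[\mathbb{F}_2(\alpha_{j+1}):\mathbb{F}_2]=[\mathbb{F}_2(\alpha_{j+1}):\mathbb{F}_2(\alpha_j)]\cdot[\mathbb{F}_2(\alpha_j):\mathbb{F}_2]$ — valid because $\mathbb{F}_2(\alpha_j)\subseteq\mathbb{F}_2(\alpha_{j+1})$, as $\alpha_j=\alpha_{j+1}^p$ lies in the larger field — I conclude that the relative degree equals $p$, as claimed.

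I do not expect a serious obstacle here, since the heavy lifting is already done by Lemma~\ref{l1}; the only thing to be careful about is the bookkeeping that connects the field-theoretic degree to the multiplicative order of $2$. The cleanest way to state that connection is: the minimal polynomial of $\alpha_j$ over $\mathbb{F}_2$ is a factor of the $p^j$-th cyclotomic polynomial, and the splitting behavior of cyclotomic polynomials over $\mathbb{F}_2$ is governed by $\mathrm{ord}_{p^j}(2)$, so $\mathbb{F}_2(\alpha_j)=\mathbb{F}_{2^{\,\mathrm{ord}_{p^j}(2)}}$. Once this is in place the argument is a one-line division. The mild subtlety worth flagging is the containment $\mathbb{F}_2(\alpha_j)\subseteq\mathbb{F}_2(\alpha_{j+1})$, which must be noted before applying the tower law, but this is transparent from $\alpha_j=\alpha_n^{p^{n-j}}=\alpha_{j+1}^{p}$.
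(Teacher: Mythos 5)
Your proposal is correct and follows essentially the same route as the paper: both reduce the lemma via Lemma~\ref{l1} to the multiplicative order of $2$ modulo prime powers, compute the absolute degrees $[\mathbb{F}_2(\alpha_j):\mathbb{F}_2]$ and $[\mathbb{F}_2(\alpha_{j+1}):\mathbb{F}_2]$, and take their ratio. In fact your bookkeeping is slightly more careful than the paper's, which states the degrees as $cp^j$ and $cp^{j+1}$ (an off-by-one in the exponent, harmless since only the ratio matters), whereas your values $cp^{j-1}$ and $cp^j$ are the exact orders of $2$ modulo $p^j$ and $p^{j+1}$.
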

\begin{proof}  It is well known that if $K$ is a finite extension of $\mathbb{F}_2$
then $|K|=2^{[K : \mathbb{F}_2]}$ and  an order of any nonzero
element $K$  divides $|K|-1$ \cite{LN}.

As in Lemma \ref{l1}, let $c$ be a least positive integer such that
$ 2^c \equiv 1~(\bmod~p)$. According to Lemma \ref{l1} we have that
$ 2^{cp^j} -1 \equiv 0~(\bmod~p^j)$, hence $\alpha_j \in
\mathbb{F}_{2^{cp^j}}$ and  $[\mathbb{F}_2(\alpha_j) :
\mathbb{F}_{2}]= cp^j$. With similar arguments we get
$[\mathbb{F}_2(\alpha_{j+1}) : \mathbb{F}_{2}]= cp^{j+1}$. The
proposition of this lemma is now established.
\end{proof}

\subsection{Properties of sequence generating polynomial}
 Let us introduce the auxiliary polynomials $E^{(p^j)}_l(x)=\sum_{i \in D^{(p^j)}_l} x^i, j=1,2,\dots,n; l=0,\dots,
d_j-1$. Denote
 $$H^{(p^j)}_{k~(\bmod~d_j)}(x)=\sum_{i=0}^{ d_j/2-1}
E^{(p^j)}_{(i+k)~(\bmod~d_j)} (x), $$
 and
$$T^{(p^j)}_{k~(\bmod~d_j)}(x)=H^{(p^j)}_{k~(\bmod~d_j)}(x)+H^{(p^{j-1})}_{k~(\bmod~d_{j-1})}(x^p)+\dots
+H^{(p)}_{k~(\bmod~d_{1})}(x^{p^{j-1}}),$$ $k=0,1,\dots, d_j-1.$

Then $H^{(p^j)}_{k~(\bmod~d_j)}(x)=\sum_{i\in \bigcup_{t=0}^{d_j-1}
D^{(p^j )}_ {{t+k}~(\bmod~d_j)}} x^i$,
$T^{(p^j)}_{k~(\bmod~d_j)}(x)=\sum_{i \in \mathcal{C}_1^{(p^j)}}
x^i$ and $S(x)=T^{(p^n)}_b(x)+1$ by \eqref{eq3} and \eqref{eq4}.

 In the rest of this paper, the subscripts $i$ in
$D^{(p^j)}_i$ and $H^{(p^j)}_i(x), T^{(p^j)}_i(x)$ will be always
assumed to be taken modulo the order $d_j$, and the modulo operation
will be omitted for simplicity.

Using \eqref{eq5} we get that
\begin{equation}
\label{eq6} L=N-\left|\left\{i \left| T^{(p^n)}_b(\alpha_n^i)=1,\ \
i \in \mathbb{Z}_{p^n}\right.\right\}\right|.
\end{equation}
According to \eqref{eq6}, in order to find  the linear complexity of
$s^\infty$ over $\mathbb{F}_2$  it is sufficient to find the values
of $T^{(p^n)}_b(x)$ in the set $\{\alpha_n^i,\ \ i =0,1,\dots,
p^n-1\}$.

Now we will study the properties of $T^{(p^j)}_k(x)$.
\begin{lem}
\label{l4} With the notation in \eqref{eq1}, we have  $aD^{(p^j)}_i
 = D^{(p^j)}_{ i+k}$  for $a \in D^{(p^j)}_k$.
\end{lem}

Lemma \ref{l4} may be proved similarly as Lemma 1 from \cite{ZX}.

\begin{lem}
\label{l5} Let $a \in D^{(p^i)}_k$. Then:

(i)
$E^{(p^j)}_{i}(\alpha_j^{p^la})=E^{(p^{j-l})}_{i+k}(\alpha_{j-l})$
for $0\leq l< j$ and $E^{(p^j)}_{i}(\alpha_j^{p^la})=e~(\bmod~2)$
for $l\geq j$;

(ii)
$H^{(p^j)}_{i}(\alpha_j^{p^la})=H^{(p^{j-l})}_{i+k}(\alpha_{j-l})$
for $0\leq l< j$ and
$H^{(p^j)}_{i}(\alpha_j^{p^la})=p^{j-1}(p-1)/2~(\bmod~2)$ for $l\geq
j$;

(iii) $T^{(p^j)}_i(\alpha_j^{p^la})=
H^{(p^{j-l})}_{i+k}(\alpha_{j-l})+H^{(p^{j-l-1})}_{i+k}(\alpha_{j-l-1})+\dots
+H^{(p)}_{i+k}(\alpha_1) + (p^{l}-1)/2~(\bmod~2)$ for $l<j$;

(iv) $T^{(p^j)}_i(\alpha_j^{p^la})= T^{(p^{j-l})}_i(\alpha_{j-l}^a)
+ (p^{l}-1)/2~(\bmod~2)$ for $l<j$.
\end{lem}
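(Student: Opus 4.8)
The plan is to prove the four parts in order, since each feeds into the next. The central tool is Lemma~\ref{l4}, which says that multiplication by an element $a\in D^{(p^i)}_k$ shifts the cyclotomic classes by $k$. The governing principle throughout is that raising the primitive root of unity to a power $p^l a$ does two things simultaneously: the factor $a$ permutes the classes (a shift by $k$), and the factor $p^l$ collapses $\alpha_j$ down to $\alpha_{j-l}$.

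For part (i), I would start from the definition $E^{(p^j)}_i(x)=\sum_{t\in D^{(p^j)}_i} x^t$ and substitute $x=\alpha_j^{p^l a}$. The key observation is that $\alpha_j^{p^l}=\alpha_{j-l}$ by the definition $\alpha_j=\alpha_n^{p^{n-j}}$, so the exponent $p^l t$ is read modulo $p^{j-l}$. Now $\{t \bmod p^{j-l} : t\in D^{(p^j)}_i\}$ is governed by \eqref{eq2}, which tells us $D^{(p^j)}_i \bmod p^{j-l}=D^{(p^{j-l})}_{i}$, with each class of the smaller modulus covered the same number of times; combining this with the shift $aD^{(p^{j-l})}_i=D^{(p^{j-l})}_{i+k}$ from Lemma~\ref{l4} (valid after reducing $a$ modulo $p^{j-l}$, so that its class index is still $k$) yields $E^{(p^{j-l})}_{i+k}(\alpha_{j-l})$. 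I must track the multiplicity: the $p^l$-fold covering contributes a factor that is $1$ modulo~$2$ when $p$ is odd, so no cancellation occurs over $\mathbb{F}_2$. For the case $l\geq j$, the exponent $p^l t$ is a multiple of $p^j$, hence $\alpha_j^{p^l a}=1$, and the sum reduces to counting $|D^{(p^j)}_i|=e$ terms, giving $e\pmod 2$.

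Part (ii) follows immediately by summing part (i) over the $d_j/2$ indices appearing in the definition of $H^{(p^j)}_i$: the shift by $k$ commutes with the summation range, so $H^{(p^j)}_i(\alpha_j^{p^l a})=H^{(p^{j-l})}_{i+k}(\alpha_{j-l})$ for $l<j$. For $l\geq j$ the summand is constant equal to $e\pmod 2$ over $d_j/2$ terms, and since $ed_j/2=\varphi(p^j)/2=p^{j-1}(p-1)/2$, I obtain the stated constant $p^{j-1}(p-1)/2 \pmod 2$. Parts (iii) and (iv) are then bookkeeping: expanding the definition of $T^{(p^j)}_i$ as the telescoping sum $H^{(p^j)}_i(x)+H^{(p^{j-1})}_i(x^p)+\cdots+H^{(p)}_i(x^{p^{j-1}})$, substituting $x=\alpha_j^{p^l a}$, and applying part (ii) termwise. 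The first $j-l$ terms (those with $H^{(p^{j'})}$ for $j'>l$) transform into the displayed chain $H^{(p^{j-l})}_{i+k}(\alpha_{j-l})+\cdots+H^{(p)}_{i+k}(\alpha_1)$, while the last $l$ terms each fall into the degenerate regime and contribute constants whose sum I must show equals $(p^l-1)/2 \pmod 2$. Part (iv) is just the recognition that the chain produced in (iii) is precisely $T^{(p^{j-l})}_i(\alpha_{j-l}^a)$.

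The main obstacle I anticipate is the careful accounting of the residual constant $(p^l-1)/2 \pmod 2$ in parts (iii) and (iv). This requires summing the constants $p^{j'-1}(p-1)/2 \pmod 2$ from part (ii) over the $l$ degenerate terms $j'=l,l-1,\dots,1$ and verifying that, modulo~$2$, the telescoping total collapses to $(p^l-1)/2$; this is where an off-by-one in the ranges or a sign/parity slip would most easily creep in. I would handle it by writing the geometric-type identity $\sum_{s=0}^{l-1}p^{s}(p-1)/2=(p^l-1)/2$ and checking its reduction modulo~$2$ term by term, keeping the multiplicity factors from part (i) under control so that each degenerate $H$-term genuinely contributes its stated constant rather than a multiple that might vanish over $\mathbb{F}_2$.
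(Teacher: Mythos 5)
Your plan reproduces the paper's own argument step for step (part (i) via Lemma~\ref{l4} and \eqref{eq2}, part (ii) by termwise summation, parts (iii)--(iv) by expanding $T^{(p^j)}_i$ and the geometric sum), but there is a genuine gap, and it sits exactly at the point you wave through with ``follows immediately'': the passage from (i) to (ii). After applying (i) to each summand, you are summing $E^{(p^{j-l})}_{t+i+k}(\alpha_{j-l})$ over $d_j/2$ consecutive values of $t$, with the subscript read modulo $d_{j-l}$. Since $d_j/2=p^l\,d_{j-l}/2=\frac{p^l-1}{2}d_{j-l}+\frac{d_{j-l}}{2}$, this range wraps around the index set: every residue is covered $\frac{p^l-1}{2}$ times, and the block of $d_{j-l}/2$ residues starting at $i+k$ is covered once more. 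Over $\mathbb{F}_2$ this gives
\begin{equation*}
H^{(p^j)}_{i}(\alpha_j^{p^la})=H^{(p^{j-l})}_{i+k}(\alpha_{j-l})+\frac{p^l-1}{2}\sum_{c\in\mathbb{Z}^{*}_{p^{j-l}}}\alpha_{j-l}^{c},
\end{equation*}
and the last sum vanishes only when $j-l\geq 2$; for $j-l=1$ it equals $1$. So (ii) as stated actually fails when $j-l=1$ and $(p^l-1)/2$ is odd. Concretely: $p=3$, $e=1$, $f=2$, $g=2$, $j=2$, $l=1$, $a=1$, $k=0$ gives $H^{(9)}_0(x)=x+x^2+x^4$, hence $H^{(9)}_0(\alpha_2^{3})=\alpha_1+\alpha_1^2+\alpha_1=\alpha_1^2$, while $H^{(3)}_0(\alpha_1)=\alpha_1\neq\alpha_1^2$. (Your multiplicity worry in (i), by the way, is misplaced: the reduction $D^{(p^j)}_i\to D^{(p^{j-l})}_{i}$ is a bijection, not a $p^l$-fold covering, since both classes have exactly $e$ elements; the real multiplicity problem is the wrap-around above.)

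The error then propagates into the constants you planned to verify in (iii) and (iv), but in a self-cancelling way: running your own telescoping computation with the corrected (ii), the single term with $j-s-l=1$ contributes an extra $\frac{p^l-1}{2}$, which cancels the $\frac{p^l-1}{2}$ coming from the degenerate terms $H^{(p^{j'})}_i(1)$, so the correct versions of (iii) and (iv) carry \emph{no} additive constant at all. In the example above, $T^{(9)}_0(\alpha_2^{3})=\alpha_1^2+1=\alpha_1=T^{(3)}_0(\alpha_1)$, not $T^{(3)}_0(\alpha_1)+1$ as the lemma asserts. To be fair, the paper's own proof commits the identical mistake: it asserts $\sum_{i=0}^{d_j/2-1}E^{(p^{j-l})}_{i+k}(\alpha_{j-l})=p^l\sum_{i=0}^{d_{j-l}/2-1}E^{(p^{j-l})}_{i+k}(\alpha_{j-l})$, which ignores the same wrap-around, so you have faithfully reconstructed its reasoning, flaw included; you even anticipated a bookkeeping hazard but located it in the harmless place (the degenerate constants) rather than in (ii). The damage stays local: \eqref{eq7}, Lemma~\ref{l6} and Theorem~\ref{t1} use (iv) only up to an additive element of $\mathbb{F}_2$, and the final count is symmetric under swapping the values $0$ and $1$, so the main theorem survives once (ii)--(iv) are restated correctly. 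But as written, neither your argument nor the paper's establishes the lemma, because in this form the lemma is false.
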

\begin{proof} First of all, we prove (i). The assertion in (i) for
$l\geq j$ is clear. Suppose $l<j$; then by Lemma \ref{l4} and
definition of $E^{(p^j)}_{i}$ we see that
$$E^{(p^j)}_{i}(\alpha_j^{p^la})=E^{(p^j)}_{i+k}(\alpha_{j}^{p^l})=\sum_{t\in
D^{(p^j)}_{i+k}}\alpha_{j}^{p^lt}=\sum_{t\in
D^{(p^j)}_{i+k}}\alpha_{j-l}^{t}.$$ For conclusion of proof we note
that $D^{(p^j)}_{i+k} ~(\bmod~p^{j-l})=D^{(p^{j-l})}_{i+k}$ by
\eqref{eq2}.

(ii) By definition and (i) we have
that
$$H^{(p^j)}_k(\alpha_j^{p^la})=\sum_{i=0}^{ d_j/2-1}
E^{(p^j)}_{i+k} (\alpha_j^{p^la})=\sum_{i=0}^{ d_j/2-1}
E^{(p^{j-l})}_{i+k}(\alpha_{j-l})= p^l\sum_{i=0}^{ d_{j-l}/2-1}
E^{(p^{j-l})}_{i+k}(\alpha_{j-l}),$$
 thus  second
statement follows from (i).

(iii) Using definition of $T^{(p^j)}_k(x)$ and (ii), we get
$$T^{(p^j)}_i(\alpha_j^{p^la})=
H^{(p^{j})}_{i}(\alpha_{j}^{p^la})+H^{(p^{j-1})}_{i}(\alpha_{j-1}^{p^la})+\dots
+H^{(p^{l+1})}_{i}(\alpha_{l+1}^{p^la})+H^{(p^{l})}_{i}(1)\dots+H^{(p)}_{i}(1)$$
or
\begin{multline*}
T^{(p^j)}_i(\alpha_j^{p^la})=H^{(p^{j-l})}_{i+k}(\alpha_{j-l})+H^{(p^{j-2})}_{i+k}(\alpha_{j-l-2})+\\
\dots +H^{(p)}_{i+k}(\alpha_1) +
p^{l-1}(p-1)/2+p^{l-2}(p-1)/2+\dots+(p-1)/2~(\bmod~2).
\end{multline*}
Thus, the assertion (iii) of this lemma follows from the latter
identity.

(iv) Observe that $ T^{(p^{j-l})}_i(\alpha_{j-l}^a)
=H^{(p^{j-l})}_{i}(\alpha_{j-l}^a)+H^{(p^{j-l-1})}_{i}(\alpha_{j-l-1}^a)+\dots
+H^{(p)}_{i}(\alpha_1^a)$ we get (iv) by (ii) and (iii).
\end{proof}
Further, since $\alpha_j$ is the primitive $p^j$th root of unity in
an extension field of $\mathbb{F}_2$, we have
$$0=\alpha_j^{p^j}-1=(\alpha_j-1)(\alpha_j^{p^j-1}+\alpha_j^{p^j-2}+\dots+\alpha_j+1).$$
Hence, by \eqref{eq3} and Lemma \ref{l5} we get
\begin{equation}
\label{eq7}
T^{(p^j)}_i(\alpha_j^a)+T^{(p^j)}_{i+d_j/2}(\alpha_j^a)=1
\end{equation}
for any $a: \ \gcd(p,a)=1, j=1,2,\dots, n$.

\begin{lem}
\label{l6} Let  $ 2^{(p-1)}  \not \equiv 1~(\bmod~p^2)$. Then
$T^{(p^n)}_k(\alpha_n^h) \not \in \{0,1\}$ for all $h \in
\mathbb{Z}_{p^n}\setminus p^{n-1}\mathbb{Z}_{p}$ and
$k=0,1,\dots,d_n-1$.
\end{lem}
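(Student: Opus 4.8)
The plan is to exploit the tower of fields $\mathbb{F}_2(\alpha_1)\subset\mathbb{F}_2(\alpha_2)\subset\cdots\subset\mathbb{F}_2(\alpha_n)$ supplied by Lemma~\ref{l3}. First I would strip off the power of $p$ dividing $h$. The excluded set $p^{n-1}\mathbb{Z}_p$ consists exactly of the multiples of $p^{n-1}$ in $\mathbb{Z}_{p^n}$ (including $0$), so any admissible $h$ can be written $h=p^l a$ with $\gcd(a,p)=1$ and $0\le l\le n-2$. Lemma~\ref{l5}(iv) then gives $T^{(p^n)}_k(\alpha_n^{h})=T^{(p^{n-l})}_k(\alpha_{n-l}^{a})+(p^l-1)/2 \pmod 2$, and since adding a constant from $\mathbb{F}_2$ preserves $\{0,1\}$, it suffices to prove $T^{(p^m)}_k(\alpha_m^{a})\notin\mathbb{F}_2$ for every $m$ with $2\le m\le n$, every $k$, and every $a$ coprime to $p$ (note $n-l\ge 2$).

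The engine is a field-theoretic vanishing fact. From the definition of $T^{(p^m)}_k$ and $(\alpha_m^a)^p=\alpha_{m-1}^a$ one has $T^{(p^m)}_k(\alpha_m^{a})=H^{(p^m)}_k(\alpha_m^{a})+T^{(p^{m-1})}_k(\alpha_{m-1}^{a})$, where the second summand lies in the subfield $\mathbb{F}_2(\alpha_{m-1})$ since each $H^{(p^s)}_k(\alpha_s^{a})$ with $s\le m-1$ does. By Lemma~\ref{l3} the extension $\mathbb{F}_2(\alpha_m)/\mathbb{F}_2(\alpha_{m-1})$ is cyclic of degree $p$, generated by $\tau=\sigma^{cp^{m-2}}$ with $\sigma\colon z\mapsto z^2$ and $c$ as in Lemma~\ref{l1}. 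By Lemma~\ref{l1}, $2^{cp^{m-2}}\equiv 1+wp^{m-1}\pmod{p^m}$ with $w\not\equiv 0\pmod p$, so for every $x$ coprime to $p$ the relative trace is $\mathrm{Tr}(\alpha_m^{x})=\sum_{t=0}^{p-1}\alpha_m^{x(1+wp^{m-1})^{t}}=\alpha_m^{x}\sum_{t=0}^{p-1}(\alpha_1^{wx})^{t}=0$. Hence $\mathrm{Tr}\big(H^{(p^m)}_k(\alpha_m^{a})\big)=0$; but any $z\in\mathbb{F}_2(\alpha_{m-1})$ satisfies $\mathrm{Tr}(z)=p\,z=z$ in characteristic $2$, so $H^{(p^m)}_k(\alpha_m^{a})\in\mathbb{F}_2(\alpha_{m-1})$ forces $H^{(p^m)}_k(\alpha_m^{a})=0$.

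Now I argue by descent. Suppose $T^{(p^m)}_k(\alpha_m^{a})\in\mathbb{F}_2$ for some admissible $m\ge 2$. Since $\mathbb{F}_2\subseteq\mathbb{F}_2(\alpha_{m-1})$, the decomposition and the previous paragraph yield $H^{(p^m)}_k(\alpha_m^{a})=0$ and therefore $T^{(p^{m-1})}_k(\alpha_{m-1}^{a})=T^{(p^m)}_k(\alpha_m^{a})\in\mathbb{F}_2$. Iterating down to level $2$ produces $H^{(p^2)}_k(\alpha_2^{a})=0$. Thus the whole lemma reduces to the single nonvanishing claim that $H^{(p^2)}_k(\alpha_2^{a})\neq 0$ for all $k$ and all $a$ coprime to $p$.

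This base case is where I expect the main obstacle to lie. By Lemma~\ref{l5}(i) multiplication of the argument by $a\in D^{(p^2)}_{k_0}$ only shifts the index, $H^{(p^2)}_k(\alpha_2^{a})=H^{(p^2)}_{k+k_0}(\alpha_2)$, so one may take $a=1$. Using $\alpha_2^{p}=\alpha_1$ and the basis $\{1,\alpha_2,\dots,\alpha_2^{p-1}\}$ of $\mathbb{F}_2(\alpha_2)$ over $\mathbb{F}_2(\alpha_1)$, I would write $H^{(p^2)}_k(\alpha_2)=\sum_{i=1}^{p-1}c_i\,\alpha_2^{i}$ with $c_i=\sum_{u\in U_i}\alpha_1^{u}\in\mathbb{F}_2(\alpha_1)$ and $U_i=\{u\in\mathbb{Z}_p:\ i+pu\in\bigcup_{l=0}^{d_2/2-1}D^{(p^2)}_{k+l}\}$ (the value $i=0$ never occurs, as all class elements are prime to $p$). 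Then $H^{(p^2)}_k(\alpha_2)=0$ if and only if every $c_i=0$, i.e. each generating polynomial $\sum_{u\in U_i}x^u$ is annihilated by the minimal polynomial of the primitive $p$-th root $\alpha_1$. The crux is to show that at least one $c_i$ is a genuinely nonzero sum of distinct $p$-th roots of unity; here the hypothesis $2^{p-1}\not\equiv 1\pmod{p^2}$ enters essentially, since by Lemma~\ref{l2} the prime $2$ lies in a class $D^{(p^2)}_{v}$ with $v\not\equiv 0\pmod p$, which spreads the Galois orbit of $\alpha_2$ across all residue fibres modulo $p$ and prevents the sets $U_i$ from being simultaneously degenerate. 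I would finish by computing, via \eqref{eq2}, how the $p$ lifts $i+pu$ of a fixed residue distribute among the classes $D^{(p^2)}_{j}$ with $j\equiv j_i\pmod{d_1}$, counting how many of these $p$ equally spaced indices fall in the half-period window $\{k,\dots,k+d_2/2-1\}$, and exhibiting one $U_i$ whose generating polynomial is not divisible by the minimal polynomial of $\alpha_1$.
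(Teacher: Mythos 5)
Your reduction is correct and, in fact, cleaner than the paper's own route: writing $h=p^la$, applying Lemma~\ref{l5}(iv), decomposing $T^{(p^m)}_k(\alpha_m^a)=H^{(p^m)}_k(\alpha_m^a)+T^{(p^{m-1})}_k(\alpha_{m-1}^a)$, and using the relative trace of $\mathbb{F}_2(\alpha_m)/\mathbb{F}_2(\alpha_{m-1})$ (which, by Lemma~\ref{l1}, indeed annihilates every $\alpha_m^x$ with $p\nmid x$ while acting as the identity on the subfield) to conclude that $H^{(p^m)}_k(\alpha_m^a)\in\mathbb{F}_2(\alpha_{m-1})$ forces $H^{(p^m)}_k(\alpha_m^a)=0$ --- all of this is sound, and none of it appears in the paper, which works with $T$ directly and never uses traces. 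Your descent correctly reduces the lemma to the single claim $H^{(p^2)}_k(\alpha_2^a)\neq 0$.

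However, that base case is a genuine gap, not a routine verification, and your sketch of it would fail as written. You propose to expand $H^{(p^2)}_k(\alpha_2)=\sum_{i=1}^{p-1}c_i\alpha_2^i$ with $c_i=\sum_{u\in U_i}\alpha_1^u$ and to ``exhibit one $U_i$'' with $c_i\neq 0$. But each $U_i$ has $(p\pm 1)/2$ elements, and a sum of that many distinct $p$-th roots of unity over $\mathbb{F}_2$ can perfectly well vanish: $c_i=0$ exactly when the minimal polynomial of $\alpha_1$, whose degree is $\mathrm{ord}_p(2)$ and may be far smaller than $p-1$, divides $\sum_{u\in U_i}x^u$, and nothing in your fibre-counting excludes this for all $i$ simultaneously; ``spreading the Galois orbit'' is not an argument. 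The missing idea is the paper's central one: the Frobenius shift on class indices. From $H^{(p^2)}_k(\alpha_2)=0$, squaring gives $H^{(p^2)}_{k+u}(\alpha_2)=0$ with $u=\mathrm{ind}_g 2$, hence $H^{(p^2)}_{k+jv}(\alpha_2)=0$ for all $j$, where $v=\gcd(u,d_2)$ divides $f$ by Lemma~\ref{l2}. Then $H^{(p^2)}_k+H^{(p^2)}_{k+v}$ is supported on only $2v\le 2f$ cyclotomic classes, i.e.\ at most $2(p-1)$ elements, whose reduction modulo $p$ hits each residue at most twice and always by distinct lifts; expanding in the basis $\{1,\alpha_2,\dots,\alpha_2^{p-1}\}$ over $\mathbb{F}_2(\alpha_1)$ therefore exhibits a nonzero polynomial of degree at most $p-1$ annihilating $\alpha_2$, contradicting Lemma~\ref{l3}. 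This comparison of two shifted windows (the paper runs it for $T$, using \eqref{eq7} to produce the pair $T^{(p^m)}_0=0$, $T^{(p^m)}_{f/2}=1$, and then the bijection of the $f$-class symmetric difference onto $\mathbb{Z}_p^*$) is exactly what makes the coefficients visibly nonzero, and without it your proof stops short at its decisive point.
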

\begin{proof} We  show that $T^{(p^n)}_k(\alpha_n^h) \not \in \{0,1\}$ by contradiction.
 Suppose  there exists $h=p^{n-m}a$ for $m>1,
\gcd(p,a)=1$ such that $T^{(p^n)}_k(\alpha_n^h) \in \mathbb{F}_2$.
Then by Lemma \ref{l5} (iv) we get $T^{(p^m)}_k(\alpha_m^a) \in
\mathbb{F}_2$.  Without loss of generality by Lemma \ref{l5}, we can
assume that $k=0$, $a=1$ and $T^{(p^m)}_0(\alpha_m)=0$.

 Denote $u=ind_g 2$ and put
$v=\gcd(u,d_m)$.  Then by Lemma \ref{l5} we obtain
$T^{(p^m)}_0(\alpha_m)=\left (T^{(p^m)}_0(\alpha_m)\right )^2=
T^{(p^m)}_{i+u}(\alpha_m)$. Since there  exist integers $y,z:
yu+zd_m=v$, it follows that
\begin{equation}
\label{eq8} 0=T^{(p^m)}_0(\alpha_m)=
T^{(p^m)}_{v}(\alpha_m)=T^{(p^m)}_{2v}(\alpha_m)=\dots =
T^{(p^m)}_{(d_m/v-1)v}(\alpha).
\end{equation}

By \eqref{eq7}  $T^{(p^m)}_{d_m/2}(\alpha_m)=1$, hence $v $ does not
divide  $d_m/2$. Since $v=\gcd(d_m,u)$ and $v \not \equiv
0~(\bmod~p)$, it follows that $v=f=2^r$. With similar arguments as
above we get
$$
1=T^{(p^m)}_{d_m/2}(\alpha_m)=
T^{(p^m)}_{d_m/2+f}(\alpha_m)=T^{(p^m)}_{d_m/2+2f}(\alpha_m)=\dots =
T^{(p^m)}_{d_m/2+(d_m/v-1)f}(\alpha_m).
$$

 So,
$T^{(p^m)}_{0}(\alpha_m)+T^{(p^m)}_{f/2}(\alpha_m)=1$ and by Lemma
\ref{l5} (iii) we have that
$H^{(p^m)}_{0}(\alpha_m)+H^{(p^m)}_{f/2}(\alpha_m)\in
\mathbb{F}_2(\alpha_{m-1}).$
 Denote $\gamma=H^{(p^m)}_{0}(\alpha_m)+H^{(p^m)}_{f/2}(\alpha_m)$.
 Then $\gamma \in
\mathbb{F}_2(\alpha_{m-1})$ and
\begin{equation}
\label{eq9}
E^{(p^m)}_{0}(\alpha_m)+\dots+E^{(p^m)}_{f/2-1}(\alpha_m)+E^{(p^m)}_{fp^{m-1}/2}(\alpha_m)
+\dots+E^{(p^m)}_{f/2+fp^{m-1}/2-1}(\alpha_m)=\gamma\end{equation}

Put
$C=D^{(p^m)}_{0}+\dots+D^{(p^m)}_{f/2-1}+D^{(p^m)}_{fp^{m-1}/2}+\dots+D^{(p^m)}_{f/2+fp^{m-1}/2-1}$.

Denote $\bar{c}=c~(\bmod~p)$ and define
$$
f(x)=\sum_{c\in C}
x^{\bar{c}}\alpha_{m-1}^{\frac{c-\bar{c}}{p}}+\gamma.
$$
 By
\eqref{eq2} we get
\begin{multline*}C~(\bmod~p)=D^{(p^m)}_{0}~(\bmod~p)+\dots+D^{(p^m)}_{f/2-1}~(\bmod~p)+\\
D^{(p^m)}_{fp^{m-1}/2}~(\bmod~p)
+\dots+D^{(p^m)}_{f/2+fp^{m-1}/2-1}~(\bmod~p)=\\
D^{(p)}_{0}+\dots+D^{(p)}_{f/2-1}+D^{(p)}_{f/2}
+\dots+D^{(p)}_{f-1}= \mathbb{Z}^{*}_p.\end{multline*}
 Thus
$f(x)=\sum_{i=1}^{p-1} c_i x^{i}+\gamma$ and $c_i\in
\mathbb{F}_2(\alpha_{m-1})$. By  \eqref{eq9} we obtain that
$$
f(\alpha_{m})=\sum_{c\in C}
\alpha_{m-1}^{\bar{c}}\alpha_{m-1}^{\frac{c-\bar{c}}{p}}+\gamma=\sum_{c\in
C} \alpha_{m}^{\bar{c}}\alpha_{m}^{{c-\bar{c}}}+\gamma=0.
$$
So, $\alpha_{m}$ is a root of polynomial $f(x)$ with coefficients
from $\mathbb{F}_2(\alpha_{m-1})$ and $\deg
 f(x)=p-1$, hence
$[\mathbb{F}_2(\alpha_{m}):\mathbb{F}_2(\alpha_{m})]<p$. This
contradicts Lemma \ref{l3}.
\end{proof}

\subsection{Main result}
After the preparations in Sect. 3.2, we can now prove the main
result of this paper.
\begin{thm}
\label{t1} Let $p$ be an odd prime and let $e$ be a factor of $p-1$
such that $f = (p-1)/ e$ is of the form $2^r$ with $r \geq 1$. Let
$s^\infty$ be a generalized cyclotomic binary sequence of period
$p^n$ defined by \eqref{eq4}. If  $2^{(p-1)} \not \equiv
1~(\bmod~p^2)$, then
$$ L =\begin{cases}
 p^n-\frac{p-1}{2}-\delta\left (\frac{p^n+1}{2}\right) ,&\text{ if  }   2 \in D^{(p)}_0, \\
p^n-\delta\left (\frac{p^n+1}{2}\right) ,&\text{ if  }   2 \not \in D^{(p)}_0, \\
 \end{cases}$$
where $\delta(t) = 1$ if $t$ is even and $\delta(t) = 0$ if $t$ is
odd.
\end{thm}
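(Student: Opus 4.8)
The plan is to evaluate the count appearing in \eqref{eq6}, namely $|\{i \in \mathbb{Z}_{p^n} : T^{(p^n)}_b(\alpha_n^i) = 1\}|$, by first localizing where $T^{(p^n)}_b(\alpha_n^i)$ can possibly lie in $\mathbb{F}_2$. By Lemma \ref{l6}, whenever $i \in \mathbb{Z}_{p^n} \setminus p^{n-1}\mathbb{Z}_p$ the value $T^{(p^n)}_b(\alpha_n^i)$ is neither $0$ nor $1$, so such $i$ never contribute. Hence the count reduces to the $p$ exponents in $\{0\} \cup \{p^{n-1}a : a \in \mathbb{Z}_p^*\}$, and the whole problem collapses to the base level $p$.

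For $i=0$ I would evaluate $T^{(p^n)}_b(1)$ directly: since $T^{(p^n)}_b(1)$ counts the nonzero elements of the characteristic set, and $|\mathcal{C}_1^{(p^n)}|=(p^n+1)/2$, one gets $T^{(p^n)}_b(1) = (|\mathcal{C}_1^{(p^n)}|-1)\bmod 2 = (p^n-1)/2 \bmod 2 = \delta((p^n+1)/2)$, so the exponent $i=0$ contributes exactly $\delta((p^n+1)/2)$ to the count. For $i = p^{n-1}a$ with $\gcd(a,p)=1$ I would apply Lemma \ref{l5}(iv) with $j=n$, $l=n-1$ to write $T^{(p^n)}_b(\alpha_n^{p^{n-1}a}) = T^{(p)}_b(\alpha_1^a) + (p^{n-1}-1)/2 \pmod 2$, and then use Lemma \ref{l5}(ii) together with Lemma \ref{l4} to identify $T^{(p)}_b(\alpha_1^a) = H^{(p)}_{b+k}(\alpha_1)$ for $a \in D^{(p)}_k$. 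Thus $T^{(p^n)}_b(\alpha_n^{p^{n-1}a}) = 1$ forces $H^{(p)}_{b+k}(\alpha_1)$ to equal a fixed element of $\mathbb{F}_2$.

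The crux, and the main obstacle, is the period-$p$ analysis, which rests on the fact that $d_1 = (p-1)/e = f = 2^r$ is a power of two. Using Frobenius and Lemma \ref{l4} gives $H^{(p)}_j(\alpha_1)^2 = H^{(p)}_{j+u}(\alpha_1)$ with $u = ind_g 2$. If $2 \in D^{(p)}_0$ then $u \equiv 0 \pmod{d_1}$, every $H^{(p)}_j(\alpha_1)$ lies in $\mathbb{F}_2$, and \eqref{eq7} pairs the indices $j$ and $j+d_1/2$ into complementary $\{0,1\}$ values; exactly $d_1/2$ of the indices carry any prescribed value, and weighting each by the $e$ units in its class yields exactly $(p-1)/2$ exponents $a$ with $T^{(p^n)}_b(\alpha_n^{p^{n-1}a})=1$, independently of the constant $(p^{n-1}-1)/2 \bmod 2$. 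If $2 \notin D^{(p)}_0$ then $u \not\equiv 0 \pmod{d_1}$, so $v := \gcd(u,d_1)$ is a proper divisor of $2^r$ and therefore divides $d_1/2$; assuming some $H^{(p)}_j(\alpha_1) \in \mathbb{F}_2$ would give $H^{(p)}_j = H^{(p)}_{j+u} = \cdots = H^{(p)}_{j+tv}$ for all $t$, in particular $H^{(p)}_j = H^{(p)}_{j+d_1/2}$, contradicting \eqref{eq7}. Hence no $H^{(p)}_j(\alpha_1)$ lies in $\mathbb{F}_2$ and these exponents contribute $0$. This is precisely the period-$p$ analogue of Lemma \ref{l6}, and it is the step that genuinely exploits $f = 2^r$.

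Combining the three contributions then gives $L = p^n - (p-1)/2 - \delta((p^n+1)/2)$ when $2 \in D^{(p)}_0$ and $L = p^n - \delta((p^n+1)/2)$ when $2 \notin D^{(p)}_0$, matching the statement. I expect the routine work to be the careful tracking of index shifts modulo $d_j$ in Lemmas \ref{l4} and \ref{l5} and the parity bookkeeping hidden in the $\delta$-term; the genuine content is the power-of-two argument ruling out $H^{(p)}_j(\alpha_1) \in \mathbb{F}_2$ in the case $2 \notin D^{(p)}_0$.
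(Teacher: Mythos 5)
Your proposal is correct, and its skeleton coincides with the paper's proof: both start from \eqref{eq6}, dispose of every exponent $i \in \mathbb{Z}_{p^n}\setminus p^{n-1}\mathbb{Z}_p$ via Lemma \ref{l6}, handle $i=0$ through the size of the characteristic set (your computation $T^{(p^n)}_b(1)=(p^n-1)/2 \bmod 2 = \delta\bigl((p^n+1)/2\bigr)$ is the same as the paper's $S(1)=(p^n+1)/2$), and push the exponents $i=p^{n-1}a$ down to the period-$p$ level with Lemma \ref{l5}(iv). The one genuine difference is how the base level is settled: the paper simply cites \cite{ZX} for the facts that $T^{(p)}_b(\alpha_1^a)\in\{0,1\}$ forces $2 \in D^{(p)}_0$ and that in this case exactly $(p-1)/2$ values of $a$ give the value $1$, whereas you prove both facts directly. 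Your argument — Frobenius gives $H^{(p)}_j(\alpha_1)^2=H^{(p)}_{j+u}(\alpha_1)$ with $u=ind_g 2$; if $d_1 \mid u$ then every $H^{(p)}_j(\alpha_1)$ is fixed by squaring, hence lies in $\mathbb{F}_2$, and \eqref{eq7} forces an exact half-and-half split of the $d_1$ indices; if $d_1 \nmid u$ then $v=\gcd(u,d_1)$ is a proper divisor of $2^r$, hence divides $d_1/2$, and the orbit $H^{(p)}_j = H^{(p)}_{j+tv}$ contradicts \eqref{eq7} — is exactly the mechanism the paper itself uses inside Lemma \ref{l6} for the levels $m>1$ (where, unlike at level $1$, the factor $p^{m-1}$ in $d_m$ makes the extra field-degree argument of Lemma \ref{l3} necessary). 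So your route makes the theorem self-contained at essentially no extra cost, while the paper's is shorter but leans on the external reference; your additional observations that the count $(p-1)/2$ is insensitive to the constant $(p^{n-1}-1)/2 \bmod 2$ from Lemma \ref{l5}(iv), and that classes of size $e$ convert the $d_1/2$ favorable indices into $(p-1)/2$ favorable exponents, are both correct and are precisely the points the paper leaves implicit.
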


\begin{proof}
  By definition of this sequence and \eqref{eq3} $S(1)=(p^n+1)/2$.
 By Lemmas \ref{l5} and \ref{l6} we get that
$T^{(p^n)}_b(\alpha_n^h) \not \in \{0,1\}$ for all $h \in
\mathbb{Z}_{p^n}\setminus p^{n-1}\mathbb{Z}_{p}$.

Further, if $h \in p^{n-1}\mathbb{Z}_{p}^*$ then by Lemma \ref{l5}
we see that $T^{(p^n)}_i(\alpha_n^{p^{n-1}a}) \in \{0, 1\}$  for $a
\in \mathbb{Z}_p^*$ if and only if $T^{(p)}_i(\alpha_1^{a}) \in \{0,
1\}$. The properties of polynomial $T^{(p)}_i(x)$ were studied in
\cite{ZX}. By \cite{ZX}, if $T^{(p)}_b(\alpha_1^{a}) \in \{0, 1\}$
then $2 \in D^{(p)}_0$, $|\{a: \ T^{(p)}_b(\alpha_1^{a})=1,
a=1,2,\dots, p-1\}=(p-1)/2$ and vise versa.

So, $$\left|\left\{i \left| T^{(p^n)}_b(\alpha_n^i)=1,\ \ i \in
\mathbb{Z}_{p^n}\right.\right\}\right|= \begin{cases} \frac{p-1}{2}+\delta\left (\frac{p^n+1}{2}\right) ,&\text{ if  }   2 \in D^{(p)}_0, \\
\delta\left (\frac{p^n+1}{2}\right) ,&\text{ if  }   2 \not \in D^{(p)}_0, \\
 \end{cases}$$
where $\delta(t) = 1$ if $t$ is even and $\delta(t) = 0$ if $t$ is
odd. This completes the proof of Theorem \ref{t1}.
\end{proof}
If prime $p$ satisfying $2^{(p-1)}  \equiv 1~(\bmod~p^2)$ then $p$
is called a Wieferich prime. By \cite{DK}  there are only two
Wieferich primes, 1093 and 3511, up to $6.7 \times 10^{15}$.
Therefore, for all other primes $p<6.7 \times 10^{15}$ besides 1093
and 3511 the result in Theorem \ref{t1} holds.

\section*{Conclusion}
In this paper, the linear complexity  of a class of new binary
cyclotomic sequences of period $p^n$ was studied. These almost
balanced  sequences are constructed by cyclotomic classes  using a
method presented by Z. Xiao et al. We proved their conjecture about
 linear complexity of these sequences and show  it be high  when $p: \ 2^{(p-1)} \not \equiv
1~(\bmod~p^2)$. In this case the linear complexity of sequence is
very close to the period. It is an interesting  problem to study
other characteristics of these sequences and also the linear
complexity for $f\neq 2^r$.

\end{document}